\documentclass[letterpaper,conference,10pt]{ieeeconf}
\pdfminorversion=4

\usepackage{booktabs}
\usepackage{amsmath,amssymb}
\usepackage{subfigure}
\usepackage{stfloats}
\usepackage{todonotes}
\usepackage[hidelinks]{hyperref}
\usepackage{graphicx}
\usepackage{epstopdf}
\usepackage{tikz}
\usepackage{lipsum}
\usepackage{circuitikz}
\usetikzlibrary{decorations.text}
\usepackage{pgfplots}
\usepgfplotslibrary{polar}
\pgfplotsset{compat=newest}
\usepackage{cite}
\usepackage{stfloats}
\usepackage{MnSymbol}
\usepackage{lipsum}
\usepackage{amsthm,dsfont}
\newtheorem{defn}{Definition}
\newtheorem{rem}[defn]{Remarks}
\newtheorem{lem}[defn]{Lemma}

\newtheorem{assum}[defn]{Assumption}

\newtheorem{thm}[defn]{Theorem}

\providecommand{\R}{\ensuremath \mathbb{R}}
\providecommand{\N}{\ensuremath \mathbb{N}}
\providecommand{\ip}[1]{\ensuremath \langle #1\rangle}
\providecommand{\spt}{\ensuremath \text{spt}}

\newtheorem{remark}{Remark}

\tikzstyle{stuff_nofill}=[rectangle,draw,font={A}]
\tikzstyle{stuff_fill}=[rectangle,draw=none,fill=white,font={A}]

\title{Convex Estimation of the $\alpha$-Confidence Reachable Sets of Systems with Parametric Uncertainty}

\author{Patrick~Holmes,~Shreyas~Kousik,~Shankar~Mohan~and~Ram~Vasudevan
 \thanks{S.~Mohan is with the Department of Electrical Engineering and Computer Science, University of Michigan, Ann Arbor, MI 48109
{\scriptsize \texttt{elemsn@umich.edu}}}
 \thanks{P.~Holmes, S.~Kousik \& R.~Vasudevan are with the Department of Mechanical Engineering, University of Michigan, Ann Arbor, MI 48109
{\scriptsize \{\texttt{pdholmes,~skousik,~ramv}\}~\texttt{@umich.edu}}}
}

\IEEEoverridecommandlockouts                              

\overrideIEEEmargins                                      
\usepackage[T1]{fontenc}

\begin{document}

\maketitle
\begin{abstract}
Observer design typically requires the observability of the underlying system, which may be hard to verify for nonlinear systems, while guaranteeing asymptotic convergence of errors, which may be insufficient in order to satisfy performance conditions in finite time. 
This paper develops a method to design Luenberger-type observers for nonlinear systems which guarantee the largest possible domain of attraction for the state estimation error regardless of the initialization of the system.
The observer design procedure is posed as a two step problem. 
In the the first step, the error dynamics are abstractly represented as a linear equation on the space of Radon measures. 
Thereafter, the problem of identifying the largest set of initial errors that can be driven to within the user-specified error target set in finite-time for all possible initial states, and the corresponding observer gains, is formulated as an infinite-dimensional linear program on measures. 
This optimization problem is solved, using Lasserre's relaxations via a sequence of semidefinite programs with vanishing conservatism.
By post-processing the solution of step one, the set of gains that maximize the size of tolerable initial errors is identified in step two.
To demonstrate the feasibility of the presented approach two examples are presented.

\end{abstract}

\section{Introduction}

Estimating the state of a system is critical to a variety of control related tasks including feedback design, diagnostics, and monitoring.
Unfortunately, measuring the states of a system can involve considerable engineering effort and cost, and sometimes may be impossible. 
Observers serve to provide a means to achieve this objective and their design has been an active area of interest within the controls community. 

The observers considered in the literature usually satisfy several requirements. 
First, they are typically designed to guarantee asymptotically stable error dynamics though finite-time convergence of the estimation error may be more useful in certain applications \cite{perruquetti2008finite,asad2000slid,du2013recursive,frye2010fast}. 
This has inspired several recent papers that have, inspired by the ideas proposed by Luenberger, proposed observers for linear and nonlinear systems \cite{perruquetti2008finite,engel2002continuous,menold2003finite,allgower1999nonlinear}. 

Second, the systems for which observers are designed are usually presumed to satisfy some observability condition.
In the case of nonlinear systems, this often requires assuming that the system is transformable to the observer canonical form and sometimes even requires that the system be observable along the solution trajectory, or at the origin \cite{xia1989nonlinear,krener1985nonlinear,kazantzis1998nonlinear}. 
This condition can be difficult to check in practice and thus results in an \textit{ad hoc} application of the observer design technique.
\par

Third, observers are typically designed to have globally convergent error dynamics.
Though this is an attractive property, construncting such observers may not be possible for every system.
In practice, an observer that is locally convergent is generally sufficient as long as one can explicitly describe the neighborhood of initial observer states that converge to the true state of the system regardless of its true initial state.

The primary contribution of this paper is the development of an automated tool to design observers for nonlinear systems that rely on static output injection to reject disturbances and uncertainty.
These numerically synthesized observers satisfy the following characteristics: (1) they do not presume the observability of the system or the existence of a transformation that renders the system observable; (2) they guarantee the finite-time behavior of the error dynamics; and, (3) they find the largest possible domain of attraction for the error dynamics.
In addition to specifying the dynamics of the system, to utilize the approach presented in this paper, a user must specify the state space of the system, the time for which the system will evolve, and the error state that they wish the dynamics to converge to within the pre-specified time.
The result of the technique presented in this paper is a static output injection gain and the largest set of initial observer states that are provably able to converge to the user specified error state in the specified time for all states of the system initialized in the user-specified state space.

The presented approach relies on dividing the observer design problem into two sub-problems.
The first sub-problem identifies the largest set of static gains for output injection and associated initial states for the observer that are able to be driven to a user specified error in finite time for all initial states in the state space.
The second sub-problem utilizes this result to identify a single (or a set) static gain with the largest set of initial observer states that are convergent. 

To tractably solve each sub-problem, this paper first transforms the dynamics of the nonlinear system and observer into a linear system over the space of measures \cite{folland2013real}. 
As a result, each sub-problem can be posed as an infinite dimensional linear program over the space of measures.
In the instance of polynomial or rational dynamics, the solution to this infinite dimensional linear program can be found with vanishing conservatism using a hierarchy of semidefinite programs. 
This solution methodology is inspired by several recent papers \cite{henrion2014convex,holmes2016convex,mohan2016convex}.

The remainder of the paper is organized as follows:
Section~\ref{sec:preliminaries} introduces the notation used in the remainder of the paper. 
Section~\ref{sec:prob} formulates the first sub-problem as an infinite-dimensional linear program on measures and describes a sequence of the semidefinite programs with vanishing conservatism to solve the first sub-problem;
Section~\ref{sec:findL} presents a method to solve the second sub-problem;
Section~\ref{sec:examples} demonstrates the performance of the presented approach on examples; and Section~\ref{sec:conclusion} concludes the paper.

\section{Preliminaries}
\label{sec:preliminaries}

This section describes the class of systems under consideration, form of the observer that is constructed, and outlines the problem of interest.

\subsection{Notation}

The following notation is adopted in the remainder of the text.
Sets are italicized and capitalized.
The set of continuous functions on a compact set $K$ are denoted by $\mathcal C(K)$.
The ring of polynomials in $x$ is denoted by $\R[x]$, and the degree of a polynomial is equal to the degree of its largest multinomial; the degree of the multinomial $x^\alpha,\,\alpha\in \N_{\ge 0}^n$ is $|\alpha|=\|\alpha\|_1$; and $\R_d[x]$ is the set of polynomials in $x$ with maximum degree $d$.
The dual to $\mathcal C(K)$ is the set of Radon measures on $K$, denoted as $\mathcal M(K)$, and the pairing of $\mu\in \mathcal M(K)$ and $v\in \mathcal C(K)$ is denoted:
  \begin{align}
  \ip{\mu,v}=\int_{K}v(x)\,d\mu(x).
  \end{align}
We denote the non-negative Radon measures by ${\cal M}_+(K)$. 
The space of Radon probability measures on $K$ is denoted by ${\cal P}(K)$. 
If a measure $\nu\in \mathcal M_+(A\times B)$ can be represented as a product measure of $\eta\in M_+(A)$ and $\zeta\in M_+(B)$, we write $\nu = \eta\otimes \zeta$. 
The Lebesgue measure on a set $A$ is denoted by $\lambda_A$. 
The support of a measure, $\mu$, is identified as $\spt(\mu)$. 
For convenience, the interval $[0,T]$ is denoted by $\mathcal T$, when necessary.


\subsection{Problem Formulation}

We next formally describe the problem of interest. 
In this paper we consider drift systems with observations of the following form:
\begin{align}
\begin{aligned}
    \dot{x}(t) &= f(t,x(t)) \\
    y(t) &= h(x(t))
    \end{aligned}
    \label{eq:sys}
\end{align}
where $x(t) \in X \subset \R^n$ are the states of the system and $C: \R^n \to Y$ describes a linear transformation from the state to the output, $y \in Y \subset \R^m$. 
Note that though the dynamics of this system may be known, usually the initial condition of this system is unknown and may start anywhere in $X$.
As a result, we construct an observer of the form:
\begin{align}
\begin{aligned}
\dot {\hat x}(t) = &\,f(t,\hat x(t))+l(y(t)-\hat y(t)) := \tilde f(t,x(t),\hat x(t),l) \\
\hat y(t) = &\,h(\hat x(t)),
\end{aligned}
\label{eq:obs}
\end{align}
where $\hat{x}(t) \in \hat{X} \subset \R^n$ and $l \in L \subset \R^{n\times m}$ is a constant gain that we design.
The objective of this paper is to find a gain $l$ in \eqref{eq:obs} that results in the largest possible set of initial observer states converging satisfactorily close to the true state of the system in a finite amount of time, $T$, regardless of the true initial state of the system.

To describe this objective explicitly, we first define the state estimation error, $e(t):=x(t)-\hat x(t) \in E$, dynamics as:
\begin{align}
\begin{aligned}
    \dot e(t) =& f(t,x(t)) - \tilde f(t,x(t),x(t)-e(t),l) \\
            :=& g(t,x(t),e(t),l).
\end{aligned}
\end{align}
Next, we define the augmented system as follows:
\begin{align}
  \begin{bmatrix}
  \dot x(t)\\
  \dot {e}(t)
  \end{bmatrix} = 
  \begin{bmatrix}
  f(t,x(t))\\
   g(t,x(t),e(t),l)
  \end{bmatrix}.
\end{align}
With $z(t) := \begin{bmatrix} x(t) & e(t) \end{bmatrix}' \in Z$ where $Z := X \times E$, the above equation can be written as:
\begin{align}
  \dot z(t) = \phi(t,z(t),l).
  \label{eq:aug}
\end{align}
In addition, let $E_T \subset \R^n$ correspond to a target state that the user wishes to drive the estimation error into by time $T$ and let $Z_T := X \times E_T$ be the target set in the augmented state space.
To formally state the objective of this paper, we next define the set of gains and associated initial error states that can be driven to $E_T$ by time $T$ under the augmented dynamics for all possible initial states of the system in Equation \eqref{eq:sys}:
\begin{align}
	\label{eq:ROA}
		{\cal X} = \Big\{ (e_0,l) &\in E \times L \mid \forall x_0 \in X ~ \exists z:[0,T] \to X \times E \nonumber \\ 
		&\textrm{ s.t } \dot{z}(t) = \phi\big(t,z(t),l\big) \text{ a.e. }t \in [0,T]  \nonumber \\ &~ z(0) = (x_0,e_0),~z(T) \in Z_T\Big\}.
\end{align}
We refer to this set as the \emph{backwards reachable set of $E_T$}.
Given this definition, the objective of this paper is to compute a gain as:
\begin{flalign}
   & &  \sup_{l \in L } & \phantom{3} \lambda_E\big( \{ e_0 \mid (e_0,l) \in {\cal X} \} \big)          && \label{eq:optgain} 
\end{flalign}
In words, this optimization problem seeks to find the gain which drives the largest set of initial error states to the desired error target set for all possible initial states of the system in Equation \eqref{eq:sys}.

Our approach to solving this problem mirrors our problem formulation. 
That is, we first compute the backwards reachable set of $E_T$ and then solve the optimization problem in Equation \eqref{eq:optgain} to find an optimal gain.
To ensure that the problem is well-posed, we make the following assumptions:
\begin{assum}
$f$ is Lipschitz in $x$ and piecewise continuous in $t$.
\end{assum}
\begin{assum}
$X$, $E$, $E_T$, and $L$ are compact subsets.  
\end{assum}

\subsection{Occupation Measures}

This section describes how to compute the backwards reachable set by transforming the nonlinear dynamics of the system into the space of measures. 
The result of this transformation is a linear description of the dynamics. 
To formulate this transformation, this section introduces occupation measure (refer to \cite{henrion2014convex,Mohan2016} for more details).

Given an initial condition for the system in Equation~\eqref{eq:aug}, $z_0$, the \emph{occupation measure} quantifies the amount of time spent by an evaluated trajectory in any subset of the space.
The occupation measure $\mu(\cdot\mid z_0,l)$ is defined as:
\begin{align}
  \mu(A\times B\times C\mid z_0,l) = \int_{0}^T I_{A\times B\times C}(t,z,l\mid z_0,l)\,dt,
  \label{eq:occ}
\end{align}
for all Borel Sets $A \times B \times C \subset {\cal T} \times Z \times L$ where $I_{K}(y)$ is the indicator function on the set $K$ that returns one if $y\in K$ and zero otherwise.
With the above definition of the occupation measure, one can show:
\begin{align}
  \ip{\mu(\cdot\mid z_0,l),v} = \ip{\lambda_{\cal T},v(t,z(t\mid z_0,l),l)},
  \label{eq:ip}
\end{align}
for all $v \in {\cal C}({\cal T} \times Z \times L)$.

The occupation measure completely characterizes the solution trajectory of the system resulting from an initial condition.
Since we are interested in the collective behavior of a set of initial conditions, we define the {\em average} occupation measure as:
\begin{align}
\mu(A\times B\times C) = \int_{Z\times L} \mu(A\times B\times C\mid z_0,l)\,d\bar \mu_0,
\end{align}
where $\bar \mu_0 \in  \mathcal M_+( Z\times L)$ is the un-normalized probability distribution of initial conditions.
The {\em average} occupation measure of a set in $\mathcal T\times Z\times L$ is equal to the cumulative time spent by all solution trajectories that begin in $\spt(\bar \mu_0)$.

By applying the Fundamental Theorem of Calculus, one can evaluate a test function $v\in \mathcal C^1(\mathcal T\times Z\times L)$ at time $t=T$ along a solution to Equation~\eqref{eq:aug} as:
\begin{align}
  v(T,z(T\mid z_0,l),l) =&\, v(0,z_0,l) + \hspace*{-0.2cm} \int_0^T \hspace*{-0.35cm} \mathcal L_{\phi}v\big(t,z(t \mid z_0,l ),l\big) dt,
  \label{eq:ftc}
\end{align}
where $\mathcal L_{\phi}:  {\mathcal C}^1(\mathcal T\times Z\times L) \to  {\mathcal C}(\mathcal T\times Z\times L)$ is defined as:
\begin{align}
  \mathcal L_{\phi}v := \frac{\partial v}{\partial z}\cdot \phi+\frac{\partial v}{\partial t},
\end{align}
Using Equation~\eqref{eq:ip}, Equation~\eqref{eq:ftc} can be re-written as:
\begin{align}
\begin{split}
  v(T,z(T\mid z_0,l),l) =&\, v(0,z_0,l) + \\
  +&\,\int\limits_{Z\times L}\mathcal L_{\phi}v\,d\mu(t,z,l\mid z_0,l).
\end{split}
\label{eq:ftc2}
\end{align}
Integrating Equation~\eqref{eq:ftc2} with respect to $\bar \mu_0$ and defining a new measure $\mu_T\in \mathcal M_+ (Z_T\times L)$, as:
\begin{align}
  \mu_T(A\times B) = \int\limits_{Z\times L}I_{A\times B}(x(T\mid z_0,l),l)\,d\bar \mu_0,
\end{align}
produces the following equality:
\begin{align}
  \ip{\delta_T\otimes \mu_T,v}= \ip{\delta_0\otimes\bar \mu_0,v}+\ip{\mu,\mathcal L_{\phi}v},
  \label{eq:lv}
\end{align}
where, with a slight abuse of notations, $\delta_t$ is used to denote a Dirac measure situated at time $t$.
Using adjoint notation, Equation~\eqref{eq:lv} can be written as:
\begin{align}
  \delta_T\otimes \mu_T= \delta_0\otimes\bar \mu_0+\mathcal L_{\phi}'\mu.
  \label{eq:adjointlv}
\end{align}
Equation~\eqref{eq:adjointlv} is a version of Liouville's Equation, holds for all test function $v\in \mathcal C^1(\mathcal T\times Z\times L)$, and summarizes the visitation information of all trajectories that emanate from $\spt(\bar \mu_0)$ and terminate in $\spt(\mu_T)$.
\section{Computing Feasible Observer Gains}
\label{sec:prob}

This section describes how to formulate and solve for the backwards reachable set defined in Equation \eqref{eq:ROA} using the occupation measures defined in the previous section.
In particular, our approach relies upon describing the evolution of the augmented system in Equation \eqref{eq:aug} using a family of measures $(\bar \mu_0,\mu_T,\mu)$ which satisfy Equation \eqref{eq:lv} while optimizing for the $\bar \mu_0$ with the largest possible support.
As we describe below this translates into an infinite dimensional linear program over measures. 

Recall that computing the backwards reachable set defined in Equation \eqref{eq:ROA} requires finding observer gains and associated initializations for the observer state that ensure all possible initial states of the system are satisfactorily estimated (i.e. the estimation error converges to $E_T$ by time $T$). 
In particular, note that the choice of the gain and the initial state of the observer cannot depend on the \emph{true} state of the system since that is not known \textit{a priori}.
This implies that the initial values of the error and gain state should be independent of the values of the initial system state.
That is, $\bar \mu_0$ is expressible as a product measure of the form $\bar \mu_0 = \mu_0\otimes \lambda_X$, where $\mu_0 \in {\cal M}_+( E \times L )$.




\subsection{Convex Computation of Feasible Gains}

The computation of ${\cal X}$ can be be posed as the solution to an infinite dimensional Linear Program (LP) on measures:
\begin{flalign}\nonumber
    & & \sup_{\Lambda} \hspace*{1cm} & \ip{\mu_{0},\mathds 1_{E \times L}} && (P)\\
    & & \text{st.} \hspace*{1cm} &\lambda_X\otimes \mu_{0}+\mathcal L_{\phi}'\mu=\,\mu_{T}, && \label{eq:primal:liouville}\\
    & & & \mu_{0}+\hat\mu_{0}=\,\lambda_{E \times L}, \label{eq:primal:slack}
\end{flalign}
where $\Lambda:=(\mu_0,\hat\mu_{0},\mu,\mu_T) \in \mathcal M_+(E\times L)\times \mathcal M_+(E\times L) \times {\mathcal M}_+({\mathcal T} \times Z \times L) \times \mathcal M_+( Z_{T}\times L )$ and $\mathds 1_{E\times L}$ denotes the function that takes value $1$ everywhere on $E \times L$.
The following property of $(P)$ can be derived using \cite[Lemma~1, Theorem~1]{henrion2014convex}:
\begin{lem}
\label{lem:primal:equiv}
    Let $p^*$ be the optimal value of $(P)$, then $p^* = \lambda_{E \times L}( {\cal X })$. 
    Moreover, the supremum is attained with the $\mu_0$-component of the optimal solution equal to the restriction of the Lebesgue measure to the backwards reachable set ${\cal X}$.

\end{lem}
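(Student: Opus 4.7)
The plan is to establish the lemma by proving the two inequalities $p^* \geq \lambda_{E\times L}({\cal X})$ and $p^* \leq \lambda_{E\times L}({\cal X})$ separately, following the template used for reachable-set LPs in the cited Henrion--Korda paper.

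For the lower bound, I would exhibit a feasible quadruple $\Lambda^*$ that attains $\lambda_{E\times L}({\cal X})$. Set $\mu_0^* := \lambda_{E\times L}\lvert_{\cal X}$ and $\hat\mu_0^* := \lambda_{E\times L} - \mu_0^*$; non-negativity of $\hat\mu_0^*$ and the slack constraint \eqref{eq:primal:slack} are immediate. For every $(e_0,l)\in{\cal X}$ and every $x_0\in X$, pick an admissible trajectory $z(\cdot\mid x_0,e_0,l)$ that reaches $Z_T$ by time $T$ (the existence is built into the definition of ${\cal X}$, and measurable selection can be justified via Lipschitzness and compactness). Integrate the single-trajectory occupation measure from \eqref{eq:occ} against $\lambda_X \otimes \mu_0^*$ to produce $\mu^*$, and likewise define $\mu_T^*$ as the push-forward of $\lambda_X \otimes \mu_0^*$ through the time-$T$ flow. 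Equation \eqref{eq:adjointlv} then gives the Liouville constraint \eqref{eq:primal:liouville}, and $\ip{\mu_0^*,\mathds 1_{E\times L}} = \lambda_{E\times L}({\cal X})$ delivers the objective value.

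For the upper bound, take any feasible $\Lambda$. The slack equation \eqref{eq:primal:slack} and non-negativity of $\hat\mu_0$ force $\mu_0 \leq \lambda_{E\times L}$, so it suffices to show that $\mu_0$ is concentrated on ${\cal X}$. This is where I would invoke \cite[Lemma 1, Theorem 1]{henrion2014convex}: the Liouville equation \eqref{eq:primal:liouville} with initial measure $\lambda_X \otimes \mu_0$ and terminal measure $\mu_T$ supported on $Z_T \times L$ can be disintegrated along $(e_0,l)$ as a superposition of admissible trajectories of \eqref{eq:aug}. Because the initial measure factors as $\lambda_X \otimes \mu_0$, for $\mu_0$-a.e.\ $(e_0,l)$ the disintegrated slice in $x_0$ is exactly $\lambda_X$, so trajectories reaching $Z_T$ exist from $\lambda_X$-a.e.\ starting state $x_0 \in X$. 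Compactness of $X$, Lipschitzness of $\phi$ in $x$ (Assumption 1) and closedness of $Z_T$ then let me pass from ``$\lambda_X$-a.e.''\ to ``for all $x_0 \in X$'' via a limiting/continuity argument on terminal points, placing $(e_0,l)$ in ${\cal X}$ up to a $\mu_0$-null set. Hence $\mu_0$ is majorized by $\lambda_{E\times L}\lvert_{\cal X}$ and the objective is bounded by $\lambda_{E\times L}({\cal X})$; combined with the lower bound this also identifies the optimal $\mu_0$ as $\lambda_{E\times L}\lvert_{\cal X}$.

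The main obstacle is the upper-bound direction, specifically the ``for all $x_0$'' strengthening. The Liouville equation only constrains behaviour in a measure-theoretic sense, yet the definition of ${\cal X}$ quantifies universally over $x_0 \in X$; closing this gap rests on the product structure $\lambda_X \otimes \mu_0$ together with the continuous-dependence properties of the ODE, precisely the content that the cited Henrion--Korda lemmas are designed to supply. Once that machinery is invoked, the remainder of the argument is bookkeeping.
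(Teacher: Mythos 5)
Your proposal is correct and follows essentially the same route as the paper, which proves this lemma simply by invoking \cite[Lemma~1, Theorem~1]{henrion2014convex}: exhibit the restriction $\lambda_{E\times L}\lvert_{\cal X}$ together with the induced occupation and terminal measures as a feasible point for the lower bound, and use the slack constraint plus the superposition/disintegration of the Liouville equation for the upper bound. You also correctly isolate the one point where this setting departs from the cited one---upgrading ``for $\lambda_X$-a.e.\ $x_0$'' to ``for all $x_0\in X$'' via uniqueness and continuity of the flow and closedness of $Z_T$, which is exactly the extra step the product structure $\lambda_X\otimes\mu_0$ is meant to enable (and which implicitly requires a mild regularity condition such as $X=\overline{\mathrm{int}\,X}$ that the paper leaves unstated).
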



\noindent The dual problem to $(P)$ is \cite{Anderson1987}:
\begin{flalign*}
  & &  \inf_{\Xi}&\phantom{3} \ip{\lambda_{el},w}          && (D) \\
  & &  \text{st.}&\phantom{3}\mathcal L_{\phi} v(t,z,l)\le 0         && \forall (t,z,l)\in \mathcal T\times Z\times L\\
  & &  &\phantom{3} w(e,l) \ge 0                                         && \forall (e,l)\in E\times L\\
  & &  &\phantom{3} w - \ip{\lambda_X,v(0,z,l)} - 1 \ge 0                     && \forall (e,l)\in E\times L\\
  & & & \phantom{3} v(T,z,l)\ge 0                                        && \forall (z,l)\in Z_T\times L
\end{flalign*}
where $\Xi := (v,w)\in \mathcal C^1(\mathcal T\times Z\times L)\times \mathcal C(E\times L)$. 
We use this dual representation of the problem to identify the support of the $\mu_0$-component of the optimal solution of $(P)$.
To do this, we first establish the equivalence between $(P)$ and $(D)$ using \cite[Theorem~2]{henrion2014convex}:
\begin{lem}
  There is no duality gap between problems $(P)$ and $(D)$.
\end{lem}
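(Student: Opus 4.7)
The plan is to invoke the general infinite-dimensional LP duality framework of Anderson and Nash, in the same form used by \cite[Thm.~2]{henrion2014convex}. Concretely, absence of a duality gap will follow once we verify three structural facts about $(P)$: (i) $(P)$ is feasible and has finite value, (ii) the constraint map is weak-$*$ continuous, and (iii) the feasible set of $(P)$ is bounded in total variation so that it is weak-$*$ compact by Banach--Alaoglu.

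First I would argue primal feasibility and finiteness. The quadruple $(\mu_0,\hat\mu_0,\mu,\mu_T)=(0,\lambda_{E\times L},0,0)$ satisfies both \eqref{eq:primal:liouville} and \eqref{eq:primal:slack}, so $(P)$ is consistent. For finiteness, the slack relation $\mu_0+\hat\mu_0=\lambda_{E\times L}$ together with non-negativity forces $\|\mu_0\|_{TV}\le \lambda_{E\times L}(E\times L)$, which is finite by compactness of $E\times L$ (Assumption~2), so the objective $\langle\mu_0,\mathds 1_{E\times L}\rangle$ is bounded above.

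Next I would bound the remaining measures. Testing \eqref{eq:primal:liouville} against $v\equiv 1$ gives $\mathcal L_\phi v=0$, hence $\|\mu_T\|=\lambda_X(X)\,\|\mu_0\|$, which is bounded since $X$ is compact. Testing against $v(t,z,l)=t$ gives $\mathcal L_\phi v=1$, hence $\|\mu\|=T\,\|\mu_T\|=T\lambda_X(X)\|\mu_0\|$, again bounded. Thus every feasible quadruple has a uniform bound on total variation, and since each measure lives on a compact set (using Assumptions~1 and~2 so that $Z\times L$ and $Z_T\times L$ are compact), Banach--Alaoglu yields that the feasible set is weak-$*$ compact.

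Finally I would check weak-$*$ continuity of the objective and the constraint operator. The objective $\mu_0\mapsto\langle\mu_0,\mathds 1_{E\times L}\rangle$ is the pairing with a fixed continuous function, hence weak-$*$ continuous. The constraint operator that maps $\Lambda$ to $(\lambda_X\otimes\mu_0+\mathcal L_\phi'\mu-\mu_T,\,\mu_0+\hat\mu_0-\lambda_{E\times L})$ is the adjoint of a map with values in $\mathcal C^1\times\mathcal C$; since $\phi$ is Lipschitz in $z$ and piecewise continuous in $t$ (Assumption~1) and $\mathcal T\times Z\times L$ is compact, $\mathcal L_\phi$ sends $\mathcal C^1$ into $\mathcal C$, so the adjoint is weak-$*$ continuous. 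At this point all hypotheses of \cite[Thm.~2]{henrion2014convex} (equivalently, of the Anderson--Nash strong-duality theorem \cite{Anderson1987}) are met, so there is no duality gap between $(P)$ and $(D)$, i.e.\ $p^*=d^*$.

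The only substantive obstacle is the total-variation bound on $\mu$: the other items are routine given the compactness assumptions. The two choices $v=1$ and $v=t$ carry out this step cleanly, so the main work is just the bookkeeping above.
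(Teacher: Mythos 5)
Your proposal is correct and follows essentially the same route as the paper, which proves this lemma simply by invoking \cite[Theorem~2]{henrion2014convex} (itself an application of the Anderson--Nash infinite-dimensional LP duality theorem). You have merely spelled out the standard verification that the citation implicitly relies on: the mass bounds obtained by testing the Liouville constraint against $v\equiv 1$ and $v=t$, weak-$*$ compactness of the feasible set, and weak-$*$ continuity of the objective and constraint maps.
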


Feasible pairs to $(D)$ have an interesting interpretation: $v$ is similar to a Lyapunov function for the system, and $w$ resembles an indicator function on $\spt(\mu_0)$, which follows from the Fundamental Theorem of Calculus and the constraints of $(D)$:
\begin{lem}
\label{lem:dual:level-set}
	Let $(v,w)$ be a pair of feasible functions to $(D)$. 
	The 1-super level set of $w$ contains $\spt(\mu_0)$.
\end{lem}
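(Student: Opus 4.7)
Invoking Lemma~\ref{lem:primal:equiv}, it suffices to show that $w(e_0,l)\ge 1$ for every $(e_0,l)\in\mathcal{X}$, since the optimal $\mu_0$ is the restriction of $\lambda_{E\times L}$ to $\mathcal{X}$ and therefore $\spt(\mu_0)\subseteq\overline{\mathcal{X}}$. Continuity of $w$ then extends the inequality to the closure.

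Fix $(e_0,l)\in\mathcal{X}$ and an arbitrary $x_0\in X$. By the definition of $\mathcal{X}$ in \eqref{eq:ROA}, there is a trajectory $z:[0,T]\to Z$ with $\dot z(t)=\phi(t,z(t),l)$, $z(0)=(x_0,e_0)$, and $z(T)\in Z_T$. Apply the Fundamental Theorem of Calculus in the form \eqref{eq:ftc} to the test function $v$:
\begin{equation*}
v(T,z(T),l) \;=\; v(0,(x_0,e_0),l) \;+\; \int_0^T \mathcal L_\phi v\big(t,z(t),l\big)\,dt.
\end{equation*}
The first dual constraint forces the integrand to be non-positive on $\mathcal T\times Z\times L$, and the fourth dual constraint gives $v(T,z(T),l)\ge 0$ since $z(T)\in Z_T$. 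Combining these,
\begin{equation*}
v(0,(x_0,e_0),l) \;\ge\; v(T,z(T),l) \;\ge\; 0.
\end{equation*}
Since $x_0\in X$ was arbitrary, the function $x\mapsto v(0,(x,e_0),l)$ is non-negative on $X$, hence
\begin{equation*}
\langle\lambda_X, v(0,(\,\cdot\,,e_0),l)\rangle \;=\; \int_X v(0,(x,e_0),l)\,dx \;\ge\; 0.
\end{equation*}

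Invoking the third dual constraint at $(e_0,l)$ yields $w(e_0,l)\ge 1+\langle\lambda_X,v(0,(\,\cdot\,,e_0),l)\rangle\ge 1$, as required. This shows $\mathcal{X}\subseteq\{(e,l)\in E\times L : w(e,l)\ge 1\}$, and because $w$ is continuous its super-level set is closed, so it also contains $\overline{\mathcal{X}}\supseteq\spt(\mu_0)$.

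The main (and essentially the only) obstacle is threading the dependencies correctly: the trajectory in the definition of $\mathcal{X}$ depends on the specific $x_0$, so one must first derive the pointwise inequality $v(0,(x_0,e_0),l)\ge 0$ for every $x_0\in X$ and only then integrate against $\lambda_X$ to match the exact form of the third dual constraint. Everything else is a direct application of Liouville-style reasoning that was already set up in Section~\ref{sec:preliminaries}.
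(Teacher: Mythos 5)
Your proof is correct and follows exactly the route the paper intends: the paper states only that the result ``follows from the Fundamental Theorem of Calculus and the constraints of $(D)$,'' and your argument fills in precisely those steps (FTC along a trajectory for each $x_0$, the sign constraints on $\mathcal L_\phi v$ and $v(T,\cdot,\cdot)$, integration against $\lambda_X$, then the third dual constraint). The care you take in first establishing the pointwise inequality $v(0,(x_0,e_0),l)\ge 0$ for every $x_0$ before integrating, and in passing from $\mathcal X$ to $\spt(\mu_0)$ via closedness of the super-level set, is exactly right.
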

As a result, the $1$-super level set of the $w$-component of any feasible pair of functions to $D$ is an outer approximation to ${\cal X}$.
In fact, one can prove that the solution to to $(D)$ coincides with ${\cal X}$ by using \cite[Theorem~3]{henrion2014convex}
\begin{thm}
\label{thm:indicator}
 There is a sequence of feasible solutions to $(D)$ whose $w$ component converges uniformly in the $L^1$ norm to the indicator function on ${\cal X}$.
\end{thm}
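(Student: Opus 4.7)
The plan is to follow the standard no-duality-gap recipe used in the occupation-measure literature: show that every dual-feasible $w$ dominates the indicator $I_{\cal X}$ pointwise (a.e.), and then exploit the fact that the dual infimum equals $\lambda_{E\times L}({\cal X})$ to sandwich the $L^1$-difference between $0$ and a quantity that vanishes along any minimizing sequence.

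First I would use Lemma~\ref{lem:primal:equiv}, which gives $p^* = \lambda_{E\times L}({\cal X})$, together with the absence of a duality gap to produce a minimizing sequence $(v_k,w_k)$ of dual-feasible pairs with $\langle\lambda_{E\times L},w_k\rangle\to p^*$. The crux is then the pointwise lower bound: I claim that for any dual-feasible $(v,w)$ and any $(e_0,l)\in{\cal X}$, one has $w(e_0,l)\ge 1$. Fix such $(e_0,l)$ and an arbitrary $x_0\in X$; by definition of ${\cal X}$ there is an absolutely continuous trajectory $z(\cdot)$ of \eqref{eq:aug} with $z(0)=(x_0,e_0)$ and $z(T)\in Z_T$. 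Applying the Fundamental Theorem of Calculus along this trajectory and using $\mathcal L_\phi v\le 0$ on $\mathcal T\times Z\times L$,
\begin{equation*}
v(T,z(T),l) - v(0,(x_0,e_0),l) \;=\; \int_0^T \mathcal L_\phi v\bigl(t,z(t),l\bigr)\,dt \;\le\; 0,
\end{equation*}
and since $v(T,\cdot,l)\ge 0$ on $Z_T$, this yields $v(0,(x_0,e_0),l)\ge 0$. Integrating over $x_0\in X$ with respect to $\lambda_X$ gives $\langle \lambda_X, v(0,(\cdot,e_0),l)\rangle\ge 0$, and the third dual constraint then gives $w(e_0,l)\ge 1 + \langle \lambda_X,v(0,(\cdot,e_0),l)\rangle\ge 1$. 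Combined with the nonnegativity constraint $w\ge 0$ on the complement of ${\cal X}$, this establishes $w\ge I_{\cal X}$ a.e.\ on $E\times L$.

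With this pointwise lower bound in hand the conclusion is immediate: for the minimizing sequence $(v_k,w_k)$,
\begin{equation*}
\|w_k - I_{\cal X}\|_{L^1(E\times L)} \;=\; \int_{E\times L}(w_k - I_{\cal X})\,d\lambda_{E\times L} \;=\; \langle \lambda_{E\times L}, w_k\rangle - \lambda_{E\times L}({\cal X}) \;\longrightarrow\; p^* - p^* \;=\; 0,
\end{equation*}
where the first equality uses $w_k\ge I_{\cal X}\ge 0$ and the limit uses Lemma~\ref{lem:primal:equiv} and strong duality. This is exactly the advertised $L^1$ convergence.

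The only step that requires real care is the pointwise domination $w\ge I_{\cal X}$; the rest is bookkeeping. The subtlety there is that the third dual constraint couples $w(e,l)$ to the $\lambda_X$-average of $v(0,\cdot,l)$ rather than to $v$ pointwise in $x_0$, so one must make sure that the Lyapunov-type inequality $v(0,z_0,l)\ge 0$ holds for \emph{every} $x_0\in X$ before averaging. This is precisely why the definition of ${\cal X}$ in \eqref{eq:ROA} quantifies over all $x_0\in X$, and it is the only place in the proof where that universal quantifier is used. Everything else follows from the dual constraints and the no-gap lemma already established.
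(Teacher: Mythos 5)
Your proof is correct and is essentially the argument the paper relies on: the paper simply defers to \cite[Theorem~3]{henrion2014convex}, and your write-up is precisely that proof adapted to this setting — establish $w \ge I_{\mathcal X}$ for every dual-feasible pair via the Fundamental Theorem of Calculus along trajectories (correctly handling the $\lambda_X$-average in the third dual constraint by first proving $v(0,(x_0,e_0),l)\ge 0$ for every $x_0$), then squeeze the $L^1$ error using Lemma~\ref{lem:primal:equiv} and the absence of a duality gap. No gaps; your identification of where the universal quantifier over $x_0$ in \eqref{eq:ROA} is used is exactly the right point of care.
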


\subsection{Solving $(P)$ via Semidefinite Programming}

Problem $(P)$ is an infinite dimensional linear program on measures, which is usually impossible to solve exactly.
This section introduces a convex relaxation hierarchy whose solutions converge with vanishing conservatism to the true solution to $(P)$. 
This sequence of relaxations is constructed by characterizing each measure using a sequence of moments\footnote{The $n$th moment of a measure $\mu$ is
  $y_{\mu,n}=\ip{\mu,x^n}.$}
and assuming the following:
\begin{assum}
$f$ is a polynomial function and $X,E,E_T,$ and $L$ are semi-algebraic sets.
  \label{assump:poly}
\end{assum}
\noindent We also make the following assumption on the semi-algebraic sets to ensure that we can construct a Semidefinite Programming (SDP) hierarchy (refer to \cite[Theorem~2.15]{lasserre2009moments}):
\begin{assum}
Each of the semi-algebraic sets $X,E,E_T,$ and $L$ has at least one defining polynomial of the form $R - \|x\|_2^2$ for some constant $R \geq 0$.
\end{assum}
\noindent This assumption is made without loss of generality since $X,E,E_T,$ and $L$ is bounded and therefore this redundant constraint can be added for a sufficiently large constant.


Under these assumption, given any finite $d$-degree truncation of the moment sequence of all measures in $(P)$, a relaxation, $(P_d)$, can be formulated over the moments of measures to construct a SDP.
The dual to $(P_d)$, $(D_d)$, can be expressed as a Sums-of-Squares (SOS) program by considering $d$-degree polynomials in place of the continuous variables in $D$.
In the interest of brevity of presentation, only $(D_d)$ is presented below. This decision is motivated by the fact that solution to $(D_d)$ can be used to identify the $\spt(\mu_0)$.

To formalize this dual program, first note that a polynomial $p \in \R[x]$ is SOS or $p \in \text{SOS}$ if it can be written as $p(x) = \sum_{i=1}^m q_i^2(x)$ for a set of polynomials $\{q_i\}_{i=1}^m \subset \R[x]$.
Note that efficient tools exist to check whether a finite dimensional polynomial is SOS using SDPs~\cite{parrilo2000structured}.
To formulate this problem, we make a few additional definitions.
Suppose we are given a semi-algebraic set $A = \{x \in \R^n \mid h_{i}(x) \geq 0, h_i \in \R[x], \forall i \in \N_m \}$, then define the $d$-degree {\em quadratic module} of $A$ as:
\begin{align}
  \begin{split}
  Q_d(A)=\bigg\{q\in \R_d[x]\,\bigg|\, \exists \{s_k\}_{k \in \{0,1,...,m\} \cup \{0\}} \subset \text{SOS s.t. } \\
  q=s_0+\sum_{k\in \{1,...,m\}}h_{k}s_k \bigg\}
  \end{split}
\end{align}

With this definition, the $d$-degree relaxation of the dual, $D_d$, can be written as:
  \begin{flalign}\nonumber
    & & \inf_{\Xi_d} \hspace*{0.1cm} & \int_{E\times L}w_d(e,l)\,d(\lambda_{el}) && \hspace*{-0.3cm} (D_d) \nonumber\\
    & & \text{st.} \hspace*{0.1cm} & w_d\in Q_d(E\times L) && \\
    & & & v_d(T,z,l)\in Q_d(Z_T \times L) && \\
    & & & -\mathcal L_{\phi}v_d(t,z,l)\in Q_d(\mathcal T\times Z \times L) && \\
    & & & w_d-\ip{\lambda_x,v_d(0,z,l)}-1\in Q_d(E\times L)
  \end{flalign}
where $\Xi_d=\Big\{ \big(v_d,w_d\big) \in \R_d[t,z,l]\times \R_d[e,l]\Big\}$.
The solution to this SDP can be used to generate an outer approximation to ${\cal X}$ which converges to ${\cal X}$ as the relaxation degree increases:
\begin{lem}\cite[Theorem~6]{henrion2014convex}
\label{lem:implementation:outer}
Let $w_d$ denote the $w$-component of the solution to $(D_d)$, then ${\cal X}_{d} = \{(e_0,l) \in E\times L \mid w_d(e,l) \geq 1 \}$ is an outer approximation of ${\cal X}$ and $\lim_{d\to\infty}\lambda_{E \times L}({\cal X}_{d} \backslash {\cal X} ) = 0$.
\end{lem}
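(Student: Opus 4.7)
The plan is to piggyback on the results already assembled: Lemma \ref{lem:primal:equiv} identifies the optimal $\mu_0$ of $(P)$ with $\lambda_{E\times L}$ restricted to $\mathcal{X}$, Lemma \ref{lem:dual:level-set} says the $1$-super level set of any feasible $w$ in $(D)$ covers $\spt(\mu_0)$, and Theorem \ref{thm:indicator} provides a sequence of $(D)$-feasible pairs whose $w$-components converge in $L^1$ to $I_{\mathcal{X}}$. The two conclusions of the lemma then split into an easy inclusion argument and an approximation argument.

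For the outer approximation claim, I would first observe that every element of the quadratic module $Q_d(A)$ is, by construction, nonnegative on $A$. Hence, given any feasible $(v_d, w_d)$ for $(D_d)$, the pair $(v_d, w_d)$ viewed inside $\mathcal{C}^1(\mathcal{T}\times Z\times L)\times \mathcal{C}(E\times L)$ automatically satisfies all four pointwise inequalities in $(D)$, so it is $(D)$-feasible. Applying Lemma \ref{lem:dual:level-set} gives $\spt(\mu_0^*) \subseteq \{w_d \geq 1\} = \mathcal{X}_d$, and combining this with Lemma \ref{lem:primal:equiv} (which tells us $\spt(\mu_0^*) = \overline{\mathcal{X}}$ up to a Lebesgue-null set) yields $\mathcal{X}\subseteq \mathcal{X}_d$ up to a null set, which is precisely the outer approximation statement.

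For the convergence in Lebesgue measure, the plan is to show that the optimal values satisfy $\inf (D_d)\to \lambda_{E\times L}(\mathcal{X})$ as $d\to\infty$, since by weak duality and Lemma \ref{lem:primal:equiv} the SDP values sandwich the true value from above. To produce a recovery sequence at each degree, I would take a pair $(v, w)$ that is $(D)$-feasible and nearly optimal (Theorem \ref{thm:indicator}), replace it by a slightly perturbed pair $(v+\varepsilon, w+\varepsilon)$ so that all four inequality constraints of $(D)$ become strict on the compact semi-algebraic sets, and then invoke Putinar's Positivstellensatz: under the Archimedean condition guaranteed by the $R-\|x\|_2^2$ constraints, each strictly positive continuous function can be uniformly approximated by polynomials lying in the corresponding quadratic module for some finite truncation degree. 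A standard Stone--Weierstrass step handles the continuous-to-polynomial approximation before the Positivstellensatz is applied. Letting $\varepsilon\to 0$ and the approximation error vanish, and then taking the original $L^1$-convergent sequence from Theorem \ref{thm:indicator}, produces $(D_d)$-feasible pairs whose objective values tend to $\lambda_{E\times L}(\mathcal{X})$.

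The nontrivial step is the Positivstellensatz-based approximation: one must ensure simultaneously that all four quadratic-module memberships hold at a common degree $d$, that the perturbation preserves feasibility after polynomial approximation, and that the objective $\int w_d\, d\lambda_{el}$ converges monotonically to $\lambda_{E\times L}(\mathcal{X})$ from above. Assuming this, the outer approximation $\mathcal{X}\subseteq \mathcal{X}_d$ gives
\begin{equation*}
\lambda_{E\times L}(\mathcal{X}_d\setminus\mathcal{X}) = \lambda_{E\times L}(\mathcal{X}_d) - \lambda_{E\times L}(\mathcal{X}) \leq \int_{E\times L} w_d\, d\lambda_{el} - \lambda_{E\times L}(\mathcal{X}),
\end{equation*}
where the rightmost quantity tends to zero as $d\to\infty$, completing the proof. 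Since this entire argument is precisely Theorem~6 of \cite{henrion2014convex} applied to the present formulation, the cleanest write-up simply verifies that the hypotheses of that theorem (Archimedean quadratic modules, no duality gap, attainment of the primal) hold in our setting and invokes it directly.
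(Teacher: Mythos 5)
Your proposal is correct and follows essentially the same route as the paper, which proves this lemma simply by invoking \cite[Theorem~6]{henrion2014convex}; your reconstruction (quadratic-module membership implies $(D)$-feasibility, hence the $1$-super level set covers $\mathcal{X}$, plus Putinar/Stone--Weierstrass to drive $\int w_d\,d\lambda_{el}$ down to $\lambda_{E\times L}(\mathcal{X})$) is exactly the argument behind that cited theorem. The only minor imprecision is routing the containment through $\spt(\mu_0^*)$, which only gives $\mathcal{X}\subseteq\mathcal{X}_d$ up to a null set; the direct trajectory argument (integrate $\mathcal{L}_\phi v_d\le 0$ along solutions from any $(x_0,e_0,l)$ with $(e_0,l)\in\mathcal{X}$ to conclude $w_d\ge 1$ there) gives the exact inclusion.
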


\begin{remark}
The method described in this section generates an outer approximation of $\mathcal{X}$. 
In fact a similar approach can be used to derive an inner approximation of ${\cal X}$ \cite{korda2013inner}.
\end{remark}

\section{Numerical Implementation}
\label{sec:implementation}

\subsection{Generating Outer Approximations of $X_0^\alpha$}
\label{ssec:outer}
This section presents a method to use the outer approximations of $\spt(\mu_0)$ derived by solving $(D_d)$, to estimate $X_0^\alpha$ as defined in Defn.~\ref{defn:brs}. The method relies on discretizing the state space and computing the probability that each node in the mesh reach the target set $X_T$ at $t=T$, through Monte Carlo simulation.

Given a $x_0$, to compute the probability of success, discretize the space of uncertainty, compute the spread of $w_d$ that solves $(D_d)$ with respect to $\theta$ as:
 \begin{align}
   \beta(x_0) :=\sum_{i=1}^N [\min\,\{1,w_d(x_0,\theta_i)\}]^k f_\theta(\theta_i)
   \label{eq:beta}
 \end{align}
 where $k\ge 1$, $\{\theta_i,\forall i\in \{1,\ldots,N\}\}$ is the set of discrete values of $\theta$, and $f_\theta(\theta)$ is the density (converted appropriately to a probability mass function) of $\mu_\theta$ with respect to $\lambda_\theta$. As the number of discretization points increases, this approach generates a convergent outer approximation of the $\alpha$-confidence BRS as shown in the following result.
\begin{lem}
\label{lem:outer:conv}
Given a desired confidence $\alpha$, let $w_d$ be the $w$ component of the optimal solution to $(D_d)$.
The direct computation method has the following properties as $k\rightarrow \infty$ and $N\rightarrow \infty$:
\begin{enumerate}[(a)]
  \item Given a $x_0\in X$
  \begin{align}
    \beta(x_0)\rightarrow \int\limits_{\{\theta\mid w_d(x_0,\theta)\ge 1\}} \min\,\{1,w_d(x_0,\theta)\}\,d\mu_\theta
    \label{eq:implementation:OA:m1:prop:convergence}
  \end{align}
  monotonically and is the estimated probability that the system, with initial condition $x_0$, arrives in $X_T$ at $t=T$.
  \item If $x_0 \in X_0^{\alpha}$, then $\beta(x_0) > \alpha$.
\end{enumerate}
\end{lem}
\begin{proof}[Proof~(Sketch)]
  The proof to part $(a)$ follows from the following observations: (i) the integrand on the right hand side of Eqn.~\eqref{eq:implementation:OA:m1:prop:convergence} is a polynomial; (ii) a density of $\mu_\theta$ wrt. $\lambda_\theta$ exists (Assumption~\ref{assump:mu_theta:abscont}); hence (iii) the right hand side of Eqn.~\eqref{eq:implementation:OA:m1:prop:convergence} is Riemann integrable. Finally, recognize that the definition of $\beta$ is the Riemann sum in question to complete the proof.
  \par
  To prove part $(b)$ recall that the 1-super-level-set of $w_d$ is an outer approximation of the the set of initial value pairs $(x,\theta)$ from which the solution trajectories arrive at $X_T\times \Theta$ at $t=T$. For all $x\in X$, let $\hat \Theta^x\subset \Theta$ be the smallest set (wrt. to $\lambda_\theta$) such that $\forall \theta\in \hat\Theta^x, w_d(x,\theta)\ge 1$. Let $w$ be the w-component of the solution to the corresponding infinite dimensional problem $(D)$, and define $\Theta_0^x:=\{\theta\in \Theta\mid w(x,\theta)\ge 1\}$. From Lemma~\ref{lem:implementation:outer}, $\Theta_0^x\subset \hat \Theta^x$; i.e. $\mu_\theta(\hat \Theta^x\backslash \Theta_0^x)\ge 0$. Now, from part \emph{(a)}, for each $x\in X$, $\beta(x)$ converges to $\mu_\theta(\hat \Theta^x)\ge \mu_\theta(\Theta_0^x)$. Thus, for each $x\in X^\alpha_0$, \mbox{$\beta(x)\ge \mu_\theta(\Theta_0^x)=\alpha$}.
\end{proof}
\begin{rem}
\label{rem:abs_const}
  It was assumed in Assumption~\ref{assump:mu_theta:abscont} that the distribution of the values of the uncertain parameters, $\mu_\theta$ satisfy $\mu_\theta \ll \lambda_\theta$. This assumption is not essential for the developments in Sections~\ref{sec:preliminaries} and \ref{sec:prob}. In fact, without this assumption, the definitions of $(P)$ and $(D)$ can be modified in the following way: (a) the right hand side of Eqn.~\ref{eq:primal:slack} becomes $\lambda_x\otimes \mu_\theta$; and (b) the objective function of $(D)$ becomes $\ip{\lambda_x\otimes \mu_\theta,w}$. In spite of the above changes, Lemmas~\ref{lem:primal:equiv}--\ref{lem:dual:level-set} and Theorems~\ref{thm:indicator} \& \ref{thm:uncertain_brs} remain unchanged.
  \par
  Assumption~\ref{assump:mu_theta:abscont} finds primary application in Section~\ref{ssec:outer} in the construction of the $\beta$ and the proof of Lemma~\ref{lem:implementation:outer} where the density of $\mu_\theta$ with respect to $\lambda_\theta$ is used in place of the measure $\mu_\theta$.
\end{rem}
%

\section{Examples}
\label{sec:examples}

This section provides three $2$D numerical experiments. 
Each SDP is prepared using a custom software toolbox and the modeling tool YALMIP \cite{lofberg2005yalmip}.
The programs are run with the commercial solver MOSEK on a machine with 144 64-bit 2.40GHz Intel Xeon CPUs and 1 Terabyte memory.
The end time in each example is set as $T=1$, and the observer gain $l$ is restricted to $L:=\{l\in\mathbb{R}^2 \mid 10-\|L\|_2\geq0\}$. 
The error space is assumed to be $ E:=\{e\in\mathbb{R}^2 \mid 1-\|e\|_2\geq0\}$, and the target error set is $E_T:=\{e\in\mathbb{R}^2 \mid 0.05-\|e\|_2\geq0\}$.
A degree $6$ relaxation is used to solve the examples. 
For simplicity, we say an observer gain $l$ is admissible given initial condition $e_0$, if the estimation error is driven into $E_T$ at $t=T$ by $l$ for all the initial condition $x_0\in X$.
Similarly an initial condition $e_0$ is feasible given observer gain $l$, if the estimation error is driven into $E_T$ at $t=T$ by $l$ for all the initial condition $x_0\in X$.
While computing $\beta$ as described in Equation \eqref{eq:beta}, $k$ is set equal to $1000$.

\subsection{2D Linear System}
\label{subsec:linear}
To validate the performance of our numerical method, we begin by considering a two dimensional linear system:
\begin{align}
\dot{x}_1 & = -x_1-3x_2\\
\dot{x}_2 & = -2x_1-6x_2\\
y &= x_1
\end{align}
where $x\in X:=\{x\in\mathbb{R}^2 \mid 1-\|x\|_2\geq 0 \}$. 
$w_d$ is first computed using $(D_d)$ and then $\beta$ was computed using $w_d$ as depicted in Figure \ref{fig:eg3}. 
The optimal gains according to the method proposed in this paper are all points belonging to the gray region in Figure \ref{fig:eg3}. 

To verify the correctness of this computed region, the gain space was uniformly sampled in polar coordinates with $2601$ points.
If all sampled initial errors in $E$ could be driven to $E_T$ for all sampled initial states, then this point was depicted in black in Figure \ref{fig:eg3}.
These black points are the sampled ground truth optimal gains. 
Notice that the gray region which we compute using our proposed method is an outer approximation to the set of ground truth optimal gains.

\begin{figure}[!h]
\centering  
 {\includegraphics[trim =0in 0in 0in  0in,width=\columnwidth,clip=true]{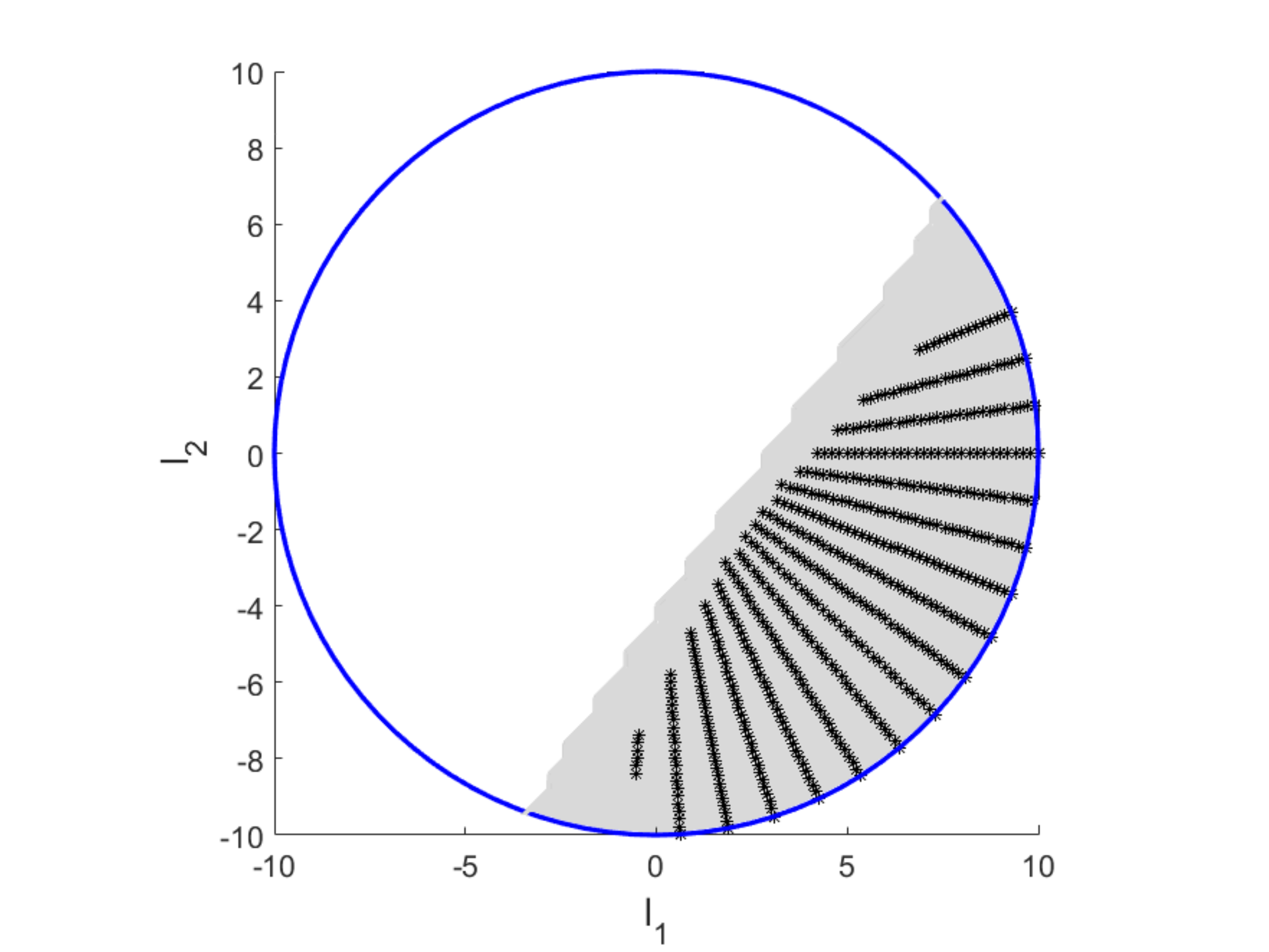}}
\caption{An illustration of the computed optimal gains for static observer design (the gray region) and the sampled ground truth optimal observer gains (black dots) as described in Section \ref{subsec:linear}.}
\label{fig:eg3}
\end{figure}

\subsection{2D Nonlinear System}
\label{subsec:nonlinear}

Consider the following 2-dimensional nonlinear system:
\begin{align}
\dot{x}_1 & = -x_1+x_1x_2\\
\dot{x}_2 & = -x_2\\
y &= x_1
\end{align}
where $x\in X:=\{x\in\mathbb{R}^2|1-\|x\|_2\geq0\}$. 
The ground truth admissible $l$ is generated by sampling the entire space of $L$ with $1200$ points under the uniform distribution in polar coordinates. 
By varying the initial condition $e_0$, the admissible area of observer gains changes as shown in Figure \ref{fig:eg2}. 
This means that there does not exist an $l$ that works for all $e_0$. 

However, an optimal statical observer gain $l_{\textrm{opt}}$ can be obtained from $w_d$ based on Section \ref{sec:findL}, such that $l_{\textrm{opt}}$ works for the largest set of initial errors $e_0\in E$.
Figure \ref{fig:eg2_hist} compares the performance of this computed optimal gain to the best gain, $l_{\textrm{sample}}$ we could find via sampling the entire state space with $961$ points using a uniform distribution in polar coordinates and another arbitrary gain in $L$.
The number of feasible $e_0$ for our computed $l_{\textrm{opt}}$ is only two less than the number for $l_{\textrm{sample}}$ and it is significantly better than the arbitrary selected gain.

\begin{figure}[!h]
\centering
\subfigure{\includegraphics[width=\columnwidth,clip=true]{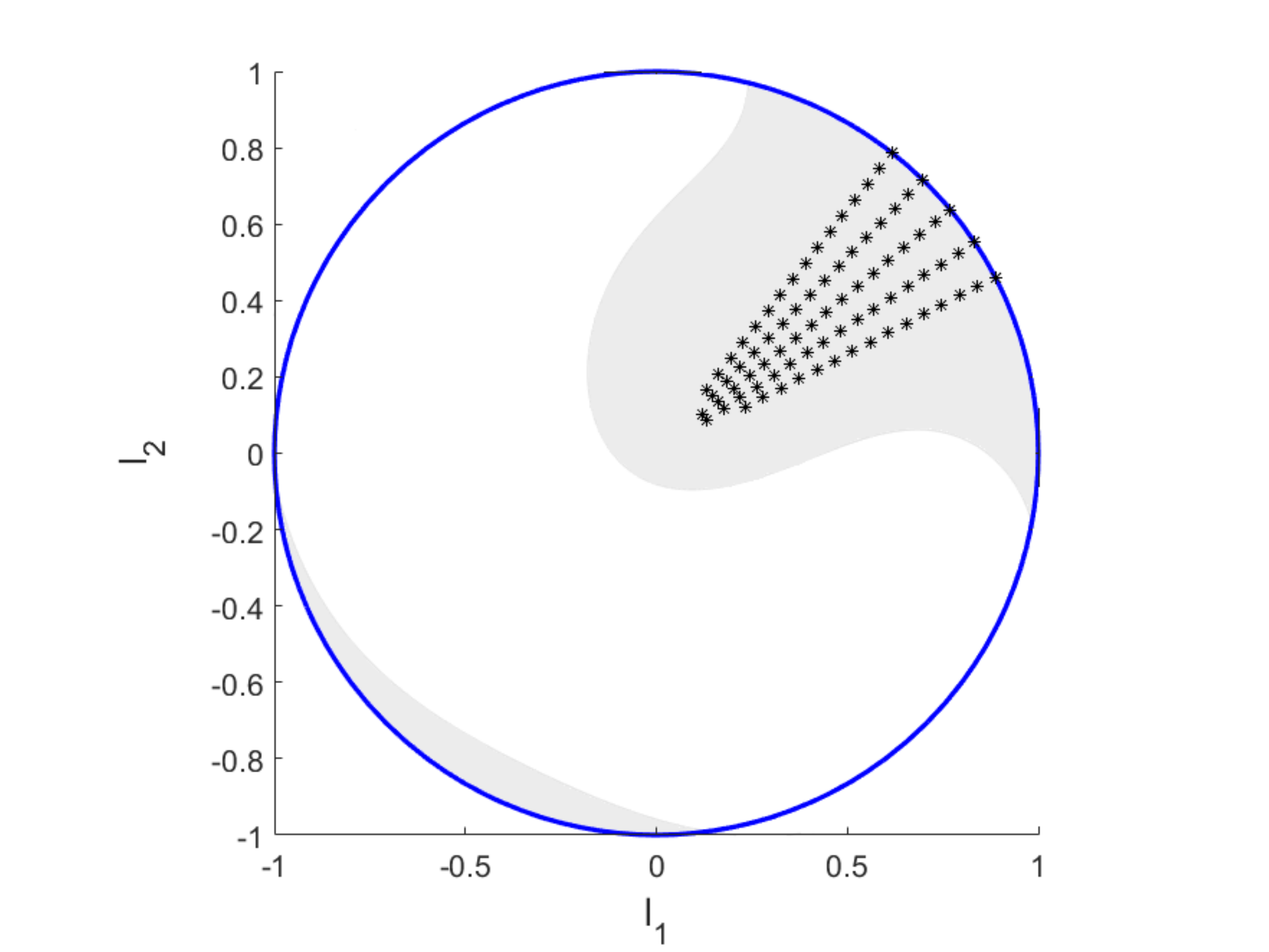}}
\subfigure{\includegraphics[width=\columnwidth,clip=true]{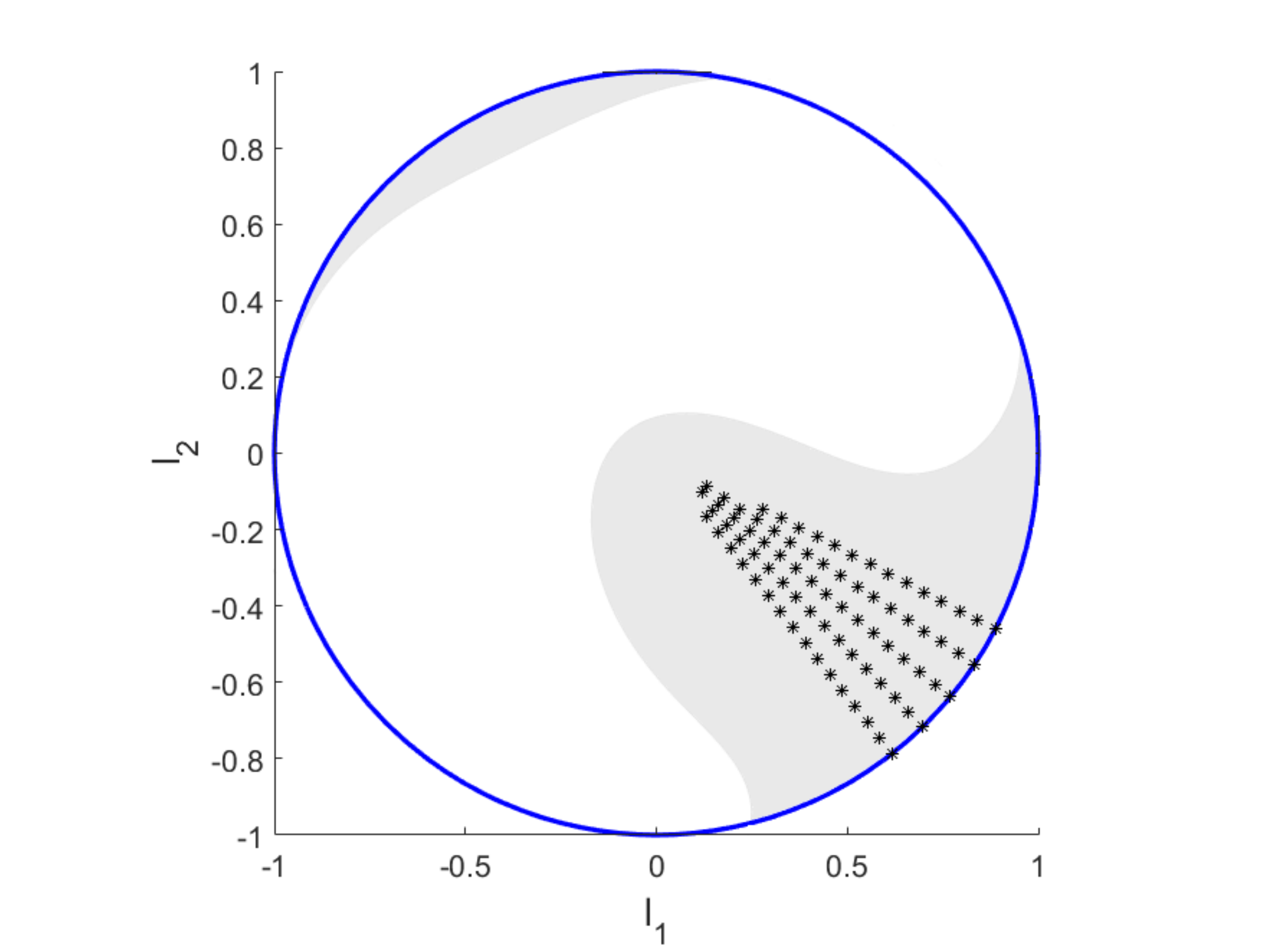}}
\caption{An illustration of slices of the computed $w_d$ for the nonlinear system described in Sec.~\ref{subsec:nonlinear} when $e_0 = [0.2; 0.2]$ (top) and when $e_0 = [0.2; -0.2]$ (bottom). The gray area inside L represents the 1-super level set of $w_d$. Dots, which are obtained by sampling, represent the sampled ground truth admissible l in each slice.}
\label{fig:eg2}
\vspace*{-.2in}
\end{figure}

\begin{figure}[!h]
\centering  
 {\includegraphics[width=\columnwidth,clip=true]{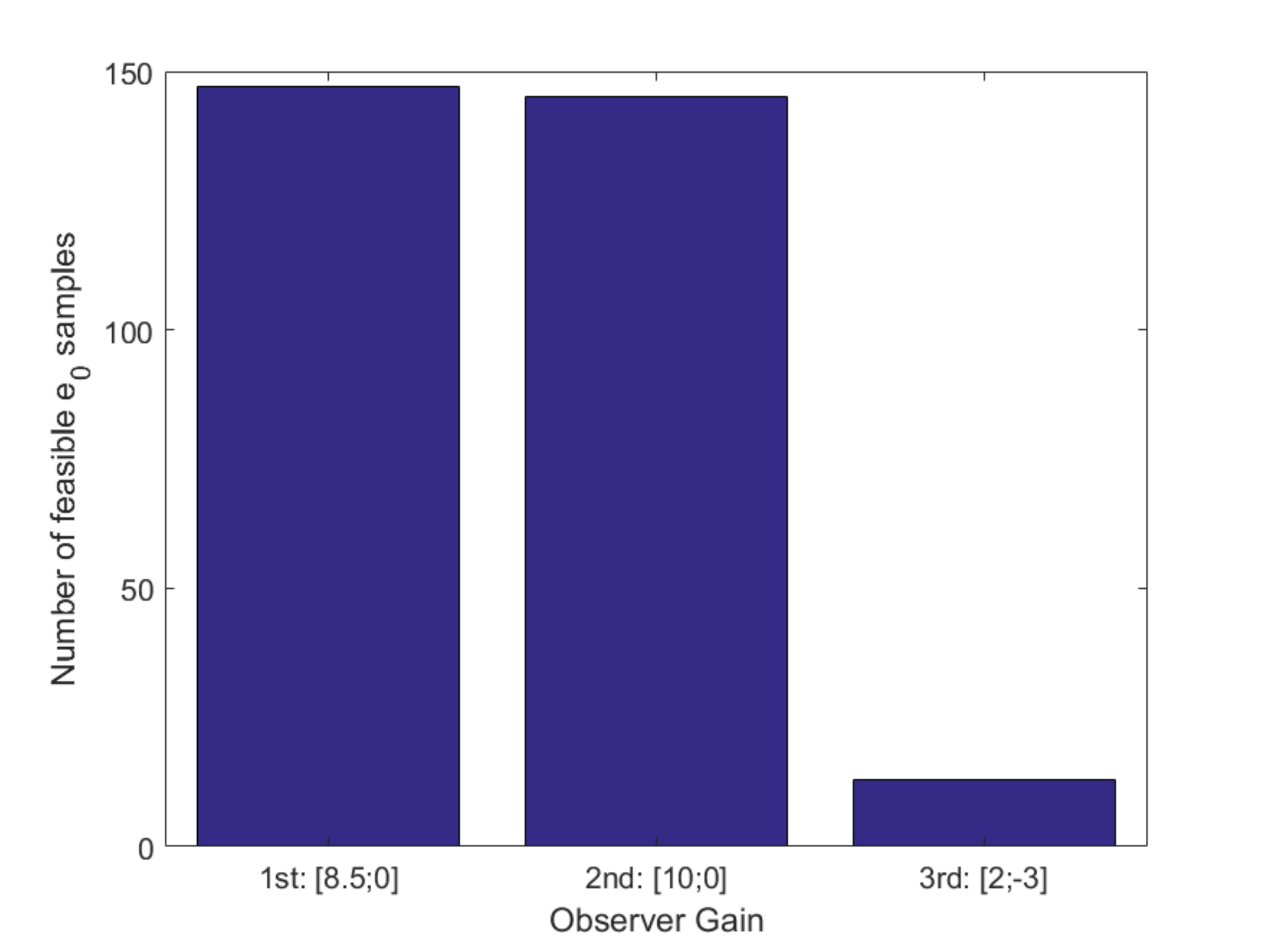}}
\caption{Bar chart depicting the number of admissible initial errors in $E$ for each associated gain. $l_{\text{sample}}$ (left) was generated by sampling, and $l_{\text{opt}}$ (middle) was generated by our proposed method. The last gain was chosen arbitrarily.}
\label{fig:eg2_hist}
\end{figure}
\section{Conclusion}
\label{sec:conclusion}
This paper describe a convex optimization technique to design an observer with static output injection for nonlinear systems.
By utilizing the notion of occupation measures, this paper proposes a two-step methodology to synthesize the gains that ensure the largest possible set of initial observer states converge to a state estimate with a desired estimation error in finite time regardless of the true initial state of the system being observed.
The first step optimizes over the space of polynomials using SDPs to find an outer approximation to the set of gains and associated initial estimation errors that have satisfactory estimation error. 
A similar framework can be applied to find an inner approximation to the set of adequate gains and initial error states.
The second step utilizes this set to select a gain that can drive the largest set of initial estimation errors to a suitable estimation error in finite time. 
The proposed method is validated numerically on several examples of varying complexities.

\bibliographystyle{ieeetr}

\end{document}